\newcommand{\fex}{f_{ex}}
\newcommand{\gex}{g_{ex}}
\newcommand{\X}{\chi_{\mu}}
\newcommand{\B}{B_\mu}
\newcommand{\bb}{\beta_{\mu}}
\newcommand{\lan}{\langle}
\newcommand{\ran}{\rangle}
\newcommand{\F}{\mathcal{F}}
\newcommand{\G}{\mathcal{G}}
\newtheorem{lemma}{Lemma}
\newtheorem{proposition}{Proposition}
\newtheorem{corollary}{Corollary}
\numberwithin{equation}{section}
\begin{document}

\title[Stability estimates for the truncated Hilbert transform]{Stability estimates for the regularized inversion of the truncated Hilbert transform}
\author[R Alaifari, M Defrise, A Katsevich]{Rima Alaifari$^1$ \and Michel Defrise $^2$ \and Alexander Katsevich$^3$}
\thanks{$^1$ Seminar for Applied Mathematics, ETH Z\"{u}rich, 8092 Z\"{u}rich, Switzerland}
\thanks{$^2$ Department of Nuclear Medicine, Vrije Universiteit Brussel, Brussels B-1050, Belgium}
\thanks{$^3$ Department of Mathematics, University of Central Florida, FL 32816, USA}

\begin{abstract}
In limited data computerized tomography, the 2D or 3D problem can be reduced to a family of 1D problems using the differentiated backprojection (DBP) method. Each 1D problem consists of recovering a compactly supported function $f \in L^2(\mathcal F)$, where $\F$ is a finite interval, from its partial Hilbert transform data. When the Hilbert transform is measured on a finite interval $\mathcal G$ that only overlaps but does not cover $\mathcal F$ this inversion problem is known to be severely ill-posed \cite{adk}.

In this paper, we study the reconstruction of $f$ restricted to the overlap region $\mathcal F \cap \mathcal G$. We show that with this restriction and by assuming prior knowledge on the $L^2$ norm or on the variation of $f$, better stability with H\"older continuity (typical for mildly ill-posed problems) can be obtained.
\end{abstract}
\maketitle

\section{Introduction}

This paper presents new results on the stability of the inversion of the Hilbert transform with limited data. The main application 
concerns tomographic reconstruction from truncated projections, where some 2D and 3D problems can be reduced
to the inversion of the Hilbert transform on a family of line segments defined by the geometry of the scanners. The reduction
of the 2D or 3D tomography problem to a 1D Hilbert transform inversion problem is achieved by backprojecting a derivative of the projection data over an angular range
of 180 degrees. This operation based on a result by Gelfand and Graev \cite{gg}  is referred to as the {\it differentiated backprojection} (DBP). In the 2000's, the DBP triggered a 
remarkable evolution in tomography by allowing accurate reconstruction of 
regions of interest (ROI) from truncated projection data sets \cite{noo,pnc,zp}, which were previously believed to exclude any accurate reconstruction.  
A second method for accurate ROI reconstruction from limited data, the {\it virtual fan-beam algorithm} (VFB), was introduced in 2002; it is based on a different 
mathematical property of the Radon transform and will not be studied in this paper. The reader is referred to \cite{spm} for a review 
on  ROI reconstruction in tomography and the link between that problem and the Hilbert transform.

We work with a line integral model of the tomographic data, assuming continuous (infinitely fine) sampling of the 2D or 3D x-ray
transform of the unknown function $f(x)$ within some subset of the  space of lines in $\mathbb{R}^2$ or $\mathbb{R}^3$. In this context "accurate" reconstruction
means that the data uniquely and stably determine $f(x)$ within a ROI, under appropriate assumptions. These assumptions specify the functional spaces to which
the object and the data belong, the nature and magnitude of the measurement noise, and typically also involve prior constraints on the object. Loosely speaking we call a reconstruction "stable" with \textit{H\"older continuity} if the norm of the error on the estimated $f(x)$ (possibly within some
limited ROI) can be bounded by some positive power of the norm of the measurement error \cite{berterodemolviano}. In contrast one speaks of \textit{logarithmic continuity} when the error bound
decreases only with the logarithm of the
noise. The noise propagation properties of the DBP are well understood, since they are similar to those of the usual 
filtered-backprojection (FBP) algorithm. This can be seen by observing that the two procedures involve the same backprojection, and 
that the ramp filter $|\nu|$ (for FBP) and the
derivative filter $2 \pi i \nu$ (for DBP) similarly amplify high frequencies.
Thus, the major question left is to analyze the stability of the
inversion of the Hilbert transform with limited data. The rest of this paper focuses on that one-dimensional inverse problem. 

To date, much more is known on conditions guaranteeing uniqueness of the solution than on the stability in the presence of noise. 
Stability is easy to analyze when a closed form analytic inversion formula is known, such as the
FBP algorithm for the Radon transform with complete data. With the DBP however, a closed form inversion exists only when 
the limited data allow calculating the Hilbert transform of $f(x)$ on a line segment $\mathcal G$ that contains the support $\mathcal F$ of $f(x)$ 
along that line. In this case, inversion is based on the finite inverse Hilbert transform and has very good stability (all singular values
but one are equal to $1$ in a weighted $L^2$ space \cite{tricomi}).
Unfortunately no closed form inversion is known for the two other configurations: the {\it interior problem} ($\mathcal G \subset
\mathcal F)$ and the {\it truncated problem} where $\mathcal G$ partially overlaps $\mathcal F$. 

It has long been known \cite{nat} that unique inversion is not possible for the interior problem, even within the ROI $\mathcal G$ 
where the data are known. However, uniqueness holds when 
additional information is available. One such instance is the case where $f(x)$ is known in some segment $\mathcal K \subset \mathcal G$ \cite{courdurier,kudo,yeyuwang}. Other results show that uniqueness is restored when $f(x)$ is assumed to belong to some function space (e.g. $f(x)$ is piecewise constant, a polynomial function or a generalized spline) \cite{wardtv,wangtv}. Numerical tests with discretized models of these problems suggest good stability but these results may depend on the way the problem is discretized. To our knowledge explicit stability
estimates have only been obtained when $f(x)$ is a polynomial function in the interior region \cite{k-k-w}.

For the {\it truncated problem} uniqueness of the solution follows from simple analyticity arguments \cite{defrise}.
However, the stability bound obtained in \cite{defrise} is difficult to interpret since it relates the reconstruction error to an upper
bound on the error on an intermediate function and not to the error on the Hilbert transform. Also, the results in \cite{defrise}  do not pertain
to a specific regularization method. Recently, a more explicit stability estimate has been obtained in \cite{aps}. 
It guarantees logarithmic continuity for the reconstruction on the entire support $\mathcal F$ of $f(x)$ if prior knowledge on the total variation of $f$ is assumed.

This paper exploits recent results on the asymptotics of the singular value decomposition (SVD) of the truncated Hilbert transform with
overlap \cite{adk}. With these, explicit $L^2$ stability estimates are obtained. They allow to guarantee stable inversion of the corresponding inverse problem when an a priori bound on the $L^2$ norm of the solution is assumed. We give specific examples for
the truncated SVD and for Tikhonov's method. By seeking to reconstruct only within a \textit{region of interest}, 
H\"older continuity is obtained. A similar result is shown for prior knowledge on the total variation of $f(x)$.

\textbf{Remark.} Stability estimates allow assessing the ill-posedness of an inverse 
problem independently
of the algorithm and independently of the discrete sampling of the data 
and solution. The inversion of the
truncated Hilbert transform is a very specific problem, which is 
severely or mildly ill-posed depending on which
region of interest is reconstructed. The stability estimates in this 
paper are derived in a deterministic framework,
they give worst case error bounds, which are pessimistic and for that 
reason do not provide reliable absolute
values for the regularization parameter. However, our results for the 
Hilbert transform
determine the H\"older power of the reconstruction accuracy in terms of 
known constants depending on the geometry of the
problem (i.e. on the intervals $\F$ and $\G$). This may provide guidance to 
adapt the regularization parameter to varying
geometry or data error, once it has been empirically estimated for one 
case.
Future work will investigate application to image reconstruction from 
incomplete CT data.

\section{The truncated Hilbert transform}

Consider the truncated Hilbert transform $H_T: L^2(\mathcal F) \rightarrow L^2(\mathcal G)$, with
$\mathcal F = (a_2,a_4)$, $\mathcal G = (a_1, a_3)$, and $a_1 < a_2 < a_3 < a_4$:
\begin{equation}
(H_T f)(x) = \frac{1}{\pi} p.v. \int_{a_2}^{a_4} \frac{f(y) dy}{y-x}, \quad x \in (a_1,a_3) 
\label{HT}
\end{equation}
 We define the normalized singular function pair $u_n \in L^2(\F)$, $v_n \in L^2(\G)$ such that $H_T u_n = \sigma_n v_n$ for $n \in \mathbb{Z}$. 
 The singular values $\sigma_n$ are ordered as usual by decreasing values, and the spectrum has two accumulation 
points, $\lim_{n \rightarrow - \infty} \sigma_n = 1$ and $\lim_{n \rightarrow  \infty} \sigma_n = 0$. 
We will make use of the following properties of the singular system of $H_T$:

\begin{itemize}
\item [--] For large $n >0$ the asymptotic behavior of the singular values is given by
\begin{equation}
\sigma_n = 2 e^{-n \pi K_+/K_-} (1+ O(n^{-1/2+\zeta})),\ n\to\infty,
\label{sigmaasympt}
\end{equation}
for some small $\zeta > 0$ and with
\begin{equation}
K_- = \int_{a_1}^{a_2}  \frac{dx}{\sqrt{-P(x)}}, \quad \quad 
K_+ = \int_{a_2}^{a_3}  \frac{dx}{\sqrt{P(x)}}.
\label{KpKm}
\end{equation}
Here, $P(x)=(x-a_1)(x-a_2)(x-a_3)(x-a_4)$.
\item [--] For large negative $n$ the asymptotic behavior of the singular values is
 \begin{equation}
 \sigma_{n} =(1- 2 e^{-2 |n| \pi K_-/K_+}) (1+ O(|n|^{-1/2+\zeta})),\ n\to-\infty.
 \label{slope1}
\end{equation}
\item [--] The singular function $v_n$ is bounded at $a_1$ and $a_3$ and has a logarithmic singularity at $a_2$. For large $n > 0$ it is an oscillatory function in the interval
$(a_1,a_2)$ and a monotonically decreasing function on $(a_2,a_3)$. 
\item [--] The function $u_n$ is bounded at $a_2$ and $a_4$ and has a logarithmic singularity at $a_3$. 
For large $n > 0$ it is a monotonically increasing function in the interval $(a_2,a_3)$ and an oscillatory function on $(a_3,a_4)$. 
\item [--] For sufficiently small $\mu > 0$, the norm of $u_n$ over the segment $(a_2,a_3-\mu)$ has the following asymptotic behavior
\begin{equation}
\left ( \int_{a_2}^{a_3-\mu} dx \, |u_n(x)|^2 \right )^{1/2}= \frac{1}{\sqrt{n \pi}} e^{-\bb n} \, (1+ O(n^{-1/2+ \zeta})),
\label{asymptfn2}
\end{equation}
where
\begin{equation}
\bb = \frac{\pi}{K_-} \int_{a_3-\mu}^{a_3} \frac{dt}{\sqrt{P(t)}} = \frac{ 2 \pi}{K_-} \frac{\sqrt{\mu}}{\sqrt{-P'(a_3)}} (1 + O(\mu)).
\label{betamu}
\end{equation}

\end{itemize}
Except for the last property which is proven in the appendix (see Lemma \ref{lemma1}), the proofs of all the above properties can be found in \cite{adk,a-k}. 
However, the monotonicity of the singular functions $u_n$ is shown more explicitly in the appendix (see Lemma \ref{lemma2}).

\section{Inversion of the truncated Hilbert transform: regularization with an $L^2$ penalty. }

We consider the following inverse problem. Let $\fex \in L^2(\F)$ be an object of interest and let $\gex=H_T \fex \in L^2(\G)$ be the 
corresponding noise-free data, with $H_T$ the truncated Hilbert transform (\ref{HT}). Given a
noisy measurement $g$ such that $\|g-\gex\|_{L^2(\G)} \leq \delta$ for some noise level $\delta > 0$, we seek an estimate of $\fex$ on some interval 
$(a_2,a_3-\mu)$, with some small $\mu > 0$.  

The rationale to restrict the reconstruction to a limited interval 
 is that there is no hope of obtaining a H\"older stability estimate outside the segment $\G=(a_1,a_3)$ where the Hilbert transform is known. 
 Following a similar intuitive analysis by Natterer \cite{nat} for the exterior problem of tomography, this can be seen by considering 
 an object  $\fex$ which is 
 supported in $(a_3,a_4)$ and has a discontinuity somewhere in that interval. For such an object, one sees from (\ref{HT}) that
  $H_T \fex$ is $C^\infty$ and therefore an inverse
 operator should map a $C^\infty$ function onto a discontinuous function, which corresponds to a severely ill-posed 
 problem. This severe ill-posedness is also expected from the exponential decay of the singular values, equation (\ref{sigmaasympt}). Thus, unless
 very restrictive prior knowledge is assumed the quest for a stable 
 reconstruction must be restricted to an interval such as $(a_2,a_3-\mu)$. By varying the  parameter $\mu$ one can study the 
 degradation of the stability as one gets closer to the limit of  the stably recoverable region $(a_2,a_3)$. 
  We denote by $\X$  the characteristic function of that "region-of-interest" interval $(a_2,a_3-\mu)$. 
  
We assume the following properties of the SVD:
\begin{itemize}
\item [--] P1: $0 < \sigma_n < 1$.
\item [--] P2: There are some $N_0 \in \mathbb{N}$ and positive real $A, \alpha$ such that $\sigma_n \geq A e^{- \alpha n}$ for $n > N_0$.
\item [--] P3: There are some $N_\mu \in \mathbb{N}$ and positive real $\B , \bb$ such that $\|\X u_n\|_{L^2(\F)} \leq \B e^{- \bb n}$ for $n > N_\mu$. 
\end{itemize}
The constants $A$ and $\alpha$ can be obtained from (\ref{sigmaasympt}):
\begin{equation}
A < 2, \quad \quad 
\alpha= \pi K_+/K_-, 
\label{Aalpha}
\end{equation}
where $A=2$ corresponds to the leading term of the asymptotic behavior of the singular values. Hence, any value of $A$ 
smaller than $2$ provides an upper bound for sufficiently large $n$. We note that the index $N_\mu$ and the constants $B_\mu, \beta_\mu$ all depend on the choice of $\mu$. In particular, for P3 to hold, $N_\mu = O(\mu^{-\frac{1}{1+2\zeta}})$; so the smaller $\mu$ is chosen to be, the larger $N_\mu$ has to be taken. Throughout this paper, we assume $\mu>0$ to be arbitrarily small but fixed. The results obtained in the following sections are no longer valid as $\mu \to 0$. This is due to the fact that $\|\chi_{\mu=0} u_n\|_{L^2(\F)}^2$ only decays as $O(1/n)$ (see equation (8.6) in \cite{adk}).
In what follows, we select $N_\mu$ such that $N_\mu > N_0$, so that P2 holds for all $n> N_\mu$. Property P3 follows from equation (\ref{asymptfn2}), 
this asymptotic relation can be changed into an upper bound by taking for instance
\begin{equation*}
\B=\frac{1}{\sqrt{N_\mu \pi}}. 
\end{equation*}

\subsection{An algorithm independent stability bound.}\label{ind-stab}
We assume as regularizing prior knowledge on the solution an upper bound $E > 0$ on the norm of the object, $\|\fex \|_{L^2(\F)} \leq E$. 
The stability bound that we present follows the approach of Miller \cite{miller}. This formulation is independent of a specific algorithm, but instead yields an 
upper bound on the $L^2$ distance (within the interval $(a_2,a_3-\mu)$) between any pair of solutions that are compatible both with the data and with the prior knowledge.

We define the set of {\it admissible solutions} as
\begin{equation*}
{\mathcal S} = \left \{ f \in L^2(\F) \ {\Big | } \ \|H_Tf-g\|_{L^2(\G)} \leq \delta \mbox{   and   } \|f\|_{L^2(\F)} \leq E \right \}.
\end{equation*}
\begin{proposition}\label{prop1}
Let $H_T \fex = \gex$ and assume $\| \fex \|_{L^2{(\F)}} \leq E$. We consider the reconstruction of $\fex$ from a noisy measurement $g$ for which $\|g-\gex\|_{L^2(\G)} \leq \delta$ is given. 
Then, any algorithm that for given $\delta$, $E$ and $g$ computes a solution in $\mathcal S$, is a regularization method for the reconstruction on $(a_2,a_3-\mu)$. 
More precisely, given any two admissible solutions $f_a, f_b \in \mathcal S$, one has for sufficiently small $\delta$ the bound
 \begin{equation}
 \| \X (f_a - f_b) \|_{L^2(\F)} \leq \frac{ 2 \delta e^{\alpha N_\mu}}{A} + 2 E \B \left \{ \frac{\delta}{A V_\mu E} \right \}^{\bb / \alpha} \, \left (\frac{\alpha}{(\alpha - \bb) \, \sqrt{e^{2 \bb}-1}} \right ),
\label{boundf1f2}
\end{equation}
where $V_\mu$ is a constant, which only depends on $\alpha$ and $\bb$.
\end{proposition}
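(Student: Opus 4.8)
The plan is to pass to the singular basis and reduce the whole statement to an estimate for a constrained sequence. Setting $h := f_a - f_b$ and $c_n := \langle h, u_n\rangle$, the two defining properties of $\mathcal S$ translate, via the triangle inequality applied to $\|f_a\|,\|f_b\|\le E$ and to $\|H_Tf_a-g\|,\|H_Tf_b-g\|\le\delta$, into the two sequence bounds $\sum_n c_n^2=\|h\|_{L^2(\F)}^2\le 4E^2$ and $\sum_n\sigma_n^2c_n^2=\|H_Th\|_{L^2(\G)}^2\le 4\delta^2$, where the second uses $H_Tu_n=\sigma_n v_n$ and orthonormality of $\{v_n\}$. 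The target quantity is $\|\X h\|_{L^2(\F)}=\|\sum_n c_n\,\X u_n\|$, with the $c_n$ constrained by these two inequalities and the vectors $\X u_n$ decaying according to P3.

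First I would isolate the low-index block $n\le N_\mu$, where P3 is not available. Since multiplication by $\X$ only decreases the $L^2$ norm and the $u_n$ are orthonormal, $\|\X\sum_{n\le N_\mu}c_nu_n\|\le(\sum_{n\le N_\mu}c_n^2)^{1/2}$; treating this as a single block rather than term by term is exactly what avoids summing infinitely many coefficients as $n\to-\infty$. Because $\sigma_n$ is decreasing and P2 gives $\sigma_{N_\mu}\ge Ae^{-\alpha N_\mu}$ (recall $N_\mu>N_0$), every term here satisfies $\sigma_n\ge Ae^{-\alpha N_\mu}$, whence $\sum_{n\le N_\mu}c_n^2\le A^{-2}e^{2\alpha N_\mu}\sum_n\sigma_n^2c_n^2\le 4\delta^2A^{-2}e^{2\alpha N_\mu}$. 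This yields precisely the first term $2\delta e^{\alpha N_\mu}/A$.

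For the high-index tail $n>N_\mu$ I would estimate termwise with $\|\X u_n\|\le\B e^{-\bb n}$ from P3, and then split once more at a free cutoff $M>N_\mu$. The middle block $N_\mu<n\le M$ I would control by the data: writing $|c_n|e^{-\bb n}=(\sigma_n|c_n|)(e^{-\bb n}/\sigma_n)$ and applying Cauchy--Schwarz with $\sum\sigma_n^2c_n^2\le 4\delta^2$ and $\sigma_n^{-1}\le A^{-1}e^{\alpha n}$ produces, after summing a finite geometric series of ratio $e^{2(\alpha-\bb)}>1$ (dominated by its top term $n=M$ since $\alpha>\bb$), a bound of the form $C_1\,\delta\,e^{(\alpha-\bb)M}$. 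The tail $n>M$ I would control by the prior: Cauchy--Schwarz with $\sum c_n^2\le 4E^2$ and $\sum_{n>M}e^{-2\bb n}=e^{-2\bb M}/(e^{2\bb}-1)$ gives $C_2\,E\,e^{-\bb M}$ with $C_2=2/\sqrt{e^{2\bb}-1}$.

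The final step balances these two competing exponentials by minimizing $C_1\delta e^{(\alpha-\bb)M}+C_2Ee^{-\bb M}$ over $M$. The stationary point obeys $e^{\alpha M}\propto E/\delta$, and substituting it back collapses the two pieces into $\tfrac{\alpha}{\alpha-\bb}\,C_2E\,(e^{\alpha M})^{-\bb/\alpha}$, which is exactly the H\"older-type second term $2E\B\,\{\delta/(AV_\mu E)\}^{\bb/\alpha}\,\alpha/((\alpha-\bb)\sqrt{e^{2\bb}-1})$, with $V_\mu$ absorbing the ratio $C_2\bb/(AC_1(\alpha-\bb))$ and therefore depending only on $\alpha$ and $\bb$. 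Two structural facts underpin the scheme: $\alpha>\bb$ (automatic, since $\int_{a_3-\mu}^{a_3}\tfrac{dt}{\sqrt{P(t)}}<\int_{a_2}^{a_3}\tfrac{dt}{\sqrt{P(t)}}$ forces $\bb<\alpha$), which both controls the middle geometric sum and keeps the exponent $\bb/\alpha$ in $(0,1)$; and the requirement that $\delta$ be small enough that the optimal $M^\ast$ exceeds $N_\mu$, which is precisely the ``sufficiently small $\delta$'' hypothesis. I expect the main obstacle to be this balancing step: recombining the several geometric-series constants cleanly into the single constant $V_\mu$, and accounting for the fact that $M$ must be an integer, which costs only a bounded factor that can be absorbed.
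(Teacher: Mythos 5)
Your proposal is correct and takes essentially the same route as the paper's proof: the same three-block SVD splitting at $N_\mu$ and a free cutoff (your $M$, the paper's $N$), the same use of P1--P3 with Cauchy--Schwarz on each block, and the same optimization of the cutoff, reproducing the identical constant $V_\mu$ and final bound. The only cosmetic differences are that you phrase the constraints in terms of the coefficients $c_n = \langle f_a-f_b, u_n\rangle$ rather than the data-side inner products $\langle g_a-g_b, v_n\rangle = \sigma_n c_n$, and a harmless slip where you state $C_2$ without the factor $B_\mu$ that nonetheless reappears correctly in your final formula.
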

\begin{proof} Consider two admissible solutions $f_a, f_b \in {\mathcal S}$. 
The corresponding Hilbert transforms $g_a=H_T f_a$ and $g_b = H_T f_b$ satisfy $\|g_a - g_b\|_{L^2(\G)} \leq \|g_a-g\|_{L^2(\G)}+\|g_b-g\|_{L^2(\G)} \leq 2 \delta$. 
Similarly $\|f_a-f_b\|_{L^2(\F)} \leq 2E$. We can rewrite $f_a-f_b$ using the SVD of $H_T$ and splitting the obtained expansion into three terms as follows:
\begin{equation}
f_a - f_b = \sum_{n=-\infty}^{N_\mu} \lan g_a-g_b, v_n\ran  \, \frac{1}{\sigma_n} \, u_n + \sum_{n=N_\mu+1}^{N} \lan g_a-g_b, v_n\ran  \, \frac{1}{\sigma_n} \, u_n + \sum_{n=N+1}^\infty \lan f_a-f_b, u_n\ran  \,  u_n.
\label{f1mf2}
\end{equation}
Here, the SVD series is split at $N_\mu$ and at some -- so far arbitrary -- index $N$ and we used $\lan  g_a, v_n \ran= \lan  H_Tf_a,v_n \ran= \sigma_n \lan f_a,u_n \ran$ 
and idem for $f_b$. Equation (\ref{f1mf2}) holds for any $N > N_\mu$.

We seek to derive an upper bound on the error in the interval of interest in the form of
\begin{equation}
\| \X (f_a - f_b)\|_{L^2(\F)} \leq I_1 + I_2 + I_3
\label{bound0}
\end{equation}
where $I_1,I_2,I_3$ correspond to the norms of the three terms in (\ref{f1mf2}) restricted by $\X$.

The error due to the large singular values is bounded as
\begin{align}
I_1^2 &= \| \X \sum_{n=-\infty}^{N_\mu} \lan g_a - g_b, v_n\ran  \, \frac{1}{\sigma_n} \, u_n \|_{L^2(\F)}^2 \leq \| \sum_{n=-\infty}^{N_\mu}  \lan g_a - g_b, v_n\ran  \, \frac{1}{\sigma_n} \, u_n \|_{L^2(\F)}^2 
\nonumber \\
&\leq   \frac{1}{\sigma^2_{N_\mu}} \sum_{n=-\infty}^{N_\mu}  |  \lan g_a - g_b, v_n\ran  |^2  \leq \frac{(2\delta)^2}{\sigma^2_{N_\mu}}. 
\label{I1}
\end{align}

The contribution of the intermediate terms corresponding to $N_\mu < n \leq N$ to the error is bounded as
\begin{align}
I^2_2 &= \| \sum_{n=N_\mu+1}^N \lan g_a-g_b, v_n\ran  \, \frac{1}{\sigma_n} \,  \X u_n \|_{L^2(\F)}^2 \leq 
\left \{ \sum_{n=N_\mu+1}^N \, |  \lan g_a-g_b, v_n\ran | \, \frac{1}{\sigma_n} \, \|  \X u_n \|_{L^2(\F)} \right \}^2 \nonumber \\
&\leq   \sum_{n'=N_\mu+1}^N \, |  \lan g_a-g_b, v_{n'}\ran |^2   \sum_{n=N_\mu+1}^N \frac{1}{\sigma^2_n} \, \|  \X u_n \|_{L^2(\F)}^2    
\leq (2\delta)^2 \sum_{n=N_\mu+1}^N \,\frac{1}{\sigma^2_n} \,   \|  \X u_n \|_{L^2(\F)}^2 \nonumber \\
&\leq \frac{ (2\delta)^2 \, \B^2}{A^2} \, \sum_{n=N_\mu+1}^N \, e^{2 (\alpha-\bb) n} = \frac{ (2\delta)^2 \, \B^2}{A^2} \,  
 \frac{ e^{2 (\alpha-\bb) N} -  e^{2 (\alpha-\bb) N_\mu} }{ 1 -  e^{-2 (\alpha-\bb)} }   \nonumber \\
&\leq \frac{ (2\delta)^2 \, \B^2}{A^2} \, \frac{ e^{2 (\alpha-\bb) N}}{ 1 -  e^{-2 (\alpha-\bb)} }, \label{I2}
\end{align}
where we successively used the triangle inequality, Schwarz inequality, and majorized by extending the sum over $n'$ to all $\mathbb{Z}$.
Recall from the definitions of these constants that $\alpha > \bb > 0$.
The last inequality is not strictly needed but simplifies subsequent derivations, and it is expected to have a limited impact since for small 
$\delta$, we will find that the optimal cut-off satisfies $N \gg N_\mu$. 

Finally,
\begin{align}
I^2_3 &= \| \sum_{n=N+1}^\infty \lan f_a-f_b, u_n\ran  \,  \X u_n \|_{L^2(\F)}^2 \leq \left \{ \sum_{n=N+1}^\infty \, | \lan f_a-f_b, u_n\ran | \, \|  \X u_n \|_{L^2(\F)} \right \}^2 \nonumber \\
&\leq   \sum_{n'=N+1}^\infty \, |  \lan f_a-f_b, u_{n'}\ran    |^2   \sum_{n=N+1}^\infty  \|  \X u_n \|_{L^2(\F)}^2    
\leq \|f_a-f_b\|_{L^2(\F)}^2 \sum_{n=N+1}^\infty \,  \|  \X u_n \|_{L^2(\F)}^2 \nonumber \\
&\leq (2 E)^2 \, \B^2 \sum_{n=N+1}^\infty \, e^{-  2 \bb n} = \left \{ 2 E \, \B e^{- \bb N} /\sqrt{e^{2 \bb}-1} \right \}^2. \label{I3}
\end{align}

Inserting equations (\ref{I1}), (\ref{I2}), (\ref{I3}) into (\ref{f1mf2}) leads to the following upper bound on the $L^2$ distance between $f_a$ and $f_b$ on the region-of-interest interval $(a_2, a_3-\mu)$:
\begin{equation}
 \| \X (f_a - f_b) \|_{L^2(\F)} \leq  \frac{2 \delta}{\sigma_{N_\mu}}   +  \frac{2 \delta \B}{A} \, 
\frac{ e^{ (\alpha-\bb) N} }{ \sqrt{1 -  e^{-2 (\alpha-\bb)} } }
+ \frac{2 E  \B e^{- \bb N}}{\sqrt{e^{2 \bb}-1}}.
\label{boundmse}
\end{equation}
A quasi-optimal splitting index $N(\delta)$ is obtained by treating $N$ as a real variable\footnote{We hereby neglect the fact that $N$ only takes integer values. This hardly changes the 
error estimate when $\delta$ is small.} and by minimizing the convex function in the RHS of (\ref{boundmse}) w.r.t. $N$:
\begin{equation}
N(\delta) = \frac{1}{\alpha} \, \log \left (  \frac{E A V_\mu}{\delta} \right )
\label{optmseN}
\end{equation}
with the constant
\begin{equation*}
V_\mu = \frac{ \bb}{(\alpha - \bb)}  \,  \left \{ \frac{ \left ( 1- e^{-2 (\alpha-\bb)} \right )}{ \left (e^{2 \bb} -1 \right ) }  \right \}^{1/2}.
\end{equation*}

Inserting $N(\delta)$ into (\ref{boundmse}) finally yields (\ref{boundf1f2}). This error bound is valid when $N(\delta)  > N_\mu$, a condition which is always satisfied for sufficiently small noise level $\delta$. 

To conclude the proof, consider any algorithm that produces for each $\delta, E$ and $g$ a solution $f_a  \in {\mathcal S}$. The bound (\ref{boundf1f2}) can be
applied with $f_b = f_{ex}$ because $f_{ex} \in {\mathcal S}$, and since the right-hand side in (\ref{boundf1f2}) tends to zero as $\delta \to 0$, this
algorithm is a regularizing method for the reconstruction on $(a_2,a_3-\mu)$.
\end{proof}
\textbf{Remarks.}
\begin{itemize}
\item[--] For small $\delta$ the second term in (\ref{boundf1f2}) eventually dominates. 
An intuitive
interpretation of the stability estimate (\ref{boundf1f2}) is that the number of significant digits that can be reliably recovered when
estimating the solution is roughly equal to a fraction $\bb/\alpha$ of the number of significant digits in the
measured data.
\item[--] The first sum in (\ref{f1mf2}) starts at $-\infty$ because the spectrum of $H_T$ has two accumulation points. 
We remark that the singular components up to $n \rightarrow -\infty$ 
can be stably recovered from the data because $\sigma_n \rightarrow 1$ as $n \rightarrow -\infty$. 
This holds only for the continuous-continuous model of the inverse problem, which is analyzed in this paper. 
In practice, only sampled data are available. Since the functions $u_n$ are increasingly oscillating functions on the interval $(a_2, a_3)$ when $n \rightarrow -\infty$ \cite{adk,a-k}, one would in
practice start the SVD expansion at a large negative $N_{min} < 0$. This lower SVD cut-off would have an effect similar to a cut-off on the high spatial frequencies.
\item[--] Setting $g=f_b=0$ leads to another equivalent formulation of stability:
if $f \in L^2(\F)$ is such that $\|H_Tf\|_{L^2(\G)} \leq \delta$ and $\|f\|_{L^2(\F)} \leq E$, then
 \begin{equation}
 \| \X f \|_{L^2(\F)}  \leq \frac{\delta e^{\alpha N_\mu}}{A}+ E \B \left \{ \frac{\delta}{A V_\mu E} \right \}^{\bb / \alpha} \, \left (\frac{\alpha}{(\alpha - \bb) \, \sqrt{e^{2 \bb}-1}} \right ).
\label{boundf1}
\end{equation}
\end{itemize}

The following corollaries of Proposition \ref{prop1} give explicit error bounds for two specific regularization algorithms. The proofs follow the same line as the proof of Proposition \ref{prop1} and are therefore omitted.

 \begin{corollary}[Truncated SVD]
The truncated SVD estimate
\begin{equation}
f_{N(\delta)} = \sum_{n=-\infty}^{N(\delta)} \lan g, v_n\ran  \, \frac{1}{\sigma_n} \, u_n
\label{TSVD}
\end{equation}
with the cut-off index $N(\delta)$ defined by (\ref{optmseN}) guarantees regularization in the sense that
\begin{equation}
\lim_{\delta \rightarrow 0} \| \X (f_{N(\delta)} - \fex)\|_{L^2(\F)}=0.
\label{regultsvd}
\end{equation}
Specifically, if $N(\delta) > N_\mu$,
\begin{equation}
 \| \X (f_N - \fex) \|_{L^2(\F)} \leq \frac{\delta e^{\alpha N_\mu}}{A} 
+ E \B \left \{ \frac{\delta}{A V_\mu E} \right \}^{\bb / \alpha} \, \left (\frac{\alpha}{(\alpha - \bb) \, \sqrt{e^{2 \bb}-1}} \right ).
\label{boundmse1}
\end{equation}
\end{corollary}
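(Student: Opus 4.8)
The plan is to repeat the three-term splitting used in the proof of Proposition \ref{prop1}, but applied directly to the truncated SVD estimate (\ref{TSVD}) rather than to a difference of two admissible solutions. First I would express the reconstruction error in the singular basis. Writing $\fex = \sum_n \lan \fex, u_n\ran u_n$ and using $\lan \fex, u_n\ran = \sigma_n^{-1} \lan \gex, v_n\ran$, the estimate (\ref{TSVD}) gives
\begin{equation*}
f_{N(\delta)} - \fex = \sum_{n=-\infty}^{N(\delta)} \frac{1}{\sigma_n} \lan g - \gex, v_n\ran\, u_n \; - \sum_{n=N(\delta)+1}^{\infty} \lan \fex, u_n\ran\, u_n .
\end{equation*}
The first sum is a noise-propagation term over the retained indices $n \le N(\delta)$, while the second is the truncation bias over the discarded indices. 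Splitting the first sum at $N_\mu$ reproduces exactly the structure of (\ref{f1mf2}), with $g_a - g_b$ replaced by $g - \gex$, with $f_a - f_b$ (in the tail) replaced by $\fex$, and with the free index now fixed at $N = N(\delta)$.

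Next I would estimate the three resulting contributions verbatim as in (\ref{I1})--(\ref{I3}). The only modification is bookkeeping on constants: since $\|g - \gex\|_{L^2(\G)} \le \delta$ and $\|\fex\|_{L^2(\F)} \le E$, every factor $2\delta$ is replaced by $\delta$ and every factor $2E$ by $E$. Using P2 in the forms $1/\sigma_{N_\mu} \le e^{\alpha N_\mu}/A$ and $1/\sigma_n \le e^{\alpha n}/A$, property P3 for $\|\X u_n\|_{L^2(\F)}$, and summing the same geometric series, I arrive at the analogue of (\ref{boundmse}) with each leading $2\delta$ and $2E$ halved.

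The decisive point is that the quasi-optimal cut-off (\ref{optmseN}) is unchanged. Indeed $N(\delta)$ depends only on the ratio of the coefficients of the two $N$-dependent terms, and this ratio is insensitive to the overall prefactors: in Proposition \ref{prop1} both terms carried a factor $2$, here neither does, and in each case the factor cancels, leaving $N(\delta) = \alpha^{-1}\log(EAV_\mu/\delta)$. Substituting this $N(\delta)$ and using the stationarity relation $(\alpha-\bb)\, a\, e^{(\alpha-\bb)N(\delta)} = \bb\, b\, e^{-\bb N(\delta)}$ (where $a, b$ are the prefactors of the two tail terms) collapses them into the single factor $\alpha / ((\alpha-\bb)\sqrt{e^{2\bb}-1})$, exactly as in the passage from (\ref{boundmse}) to (\ref{boundf1f2}), and yields (\ref{boundmse1}). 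The regularization claim (\ref{regultsvd}) then follows immediately: with $\mu$ fixed, $N_\mu, A, \B, \alpha, \bb$ are constants and $\bb/\alpha > 0$, so both terms on the right of (\ref{boundmse1}) vanish as $\delta \to 0$, while $N(\delta) > N_\mu$ holds automatically for small $\delta$.

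I do not expect a genuine analytic obstacle, since every inequality duplicates one already established. The single point requiring care — and the reason one cannot simply invoke Proposition \ref{prop1} as a black box — is that the truncated SVD output $f_{N(\delta)}$ need not belong to the admissible set $\mathcal S$: it may violate $\|H_T f - g\|_{L^2(\G)} \le \delta$ or $\|f\|_{L^2(\F)} \le E$. Consequently the tail term cannot be bounded through $\|f_a - f_b\|_{L^2(\F)} \le 2E$; instead one must observe that the discarded coefficients are precisely $\lan \fex, u_n\ran$ and control them using the prior $\|\fex\|_{L^2(\F)} \le E$ directly. Once this decomposition is in place, the rest is the routine repetition of (\ref{I1})--(\ref{I3}) outlined above.
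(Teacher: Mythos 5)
Your proposal is correct and is essentially the proof the paper intends: the paper omits the argument, stating only that it ``follows the same line as the proof of Proposition \ref{prop1}'', and your three-term splitting of $f_{N(\delta)}-\fex$ with $2\delta$ replaced by $\delta$, $2E$ replaced by $E$, and the observation that the factor-of-two cancellation leaves the cut-off (\ref{optmseN}) and the collapsed constant $\alpha/((\alpha-\bb)\sqrt{e^{2\bb}-1})$ unchanged, carries that line out faithfully and reproduces (\ref{boundmse1}) exactly. Your closing remark---that $f_{N(\delta)}$ need not lie in $\mathcal S$, so Proposition \ref{prop1} cannot be invoked as a black box and the tail must be controlled via $\|\fex\|_{L^2(\F)}\leq E$ directly---is a correct and worthwhile clarification of why the corollary needs its own (parallel) derivation.
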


Note that the quasi-optimal number (\ref{optmseN}) of terms in the truncated SVD (\ref{TSVD}) increases only very slowly with decreasing noise level $\delta$,
which is a typical behavior of severely ill-posed inverse problems. 
The corresponding lower bound on the singular values that are included in the truncated SVD solution estimate,
\begin{equation}
\sigma_{N(\delta)} \geq  \frac{\delta}{E V_\mu}
\label{sigmaoptmseN}
\end{equation}
is proportional to the noise to signal ratio $\delta/E$. This behavior of the cut-off singular value is 
also obtained for inverse problems with compact operators using Miller's method \cite{berterodemolviano,miller}, \cite[p. 258]{bertero}. 

\begin{corollary}[Tikhonov regularization]
The Tikhonov estimate 
\begin{equation}
f_\eta = \arg \min_{ f \in L^2(\F)} \Phi_\eta(f)   \mbox{  with  } \Phi_\eta(f)= \|H_Tf-g\|_{L^2(\G)}^2 + \eta \|f\|_{L^2(\F)}^2 
\label{Tikhonov}
\end{equation}
with $\eta=\eta(\delta)>0$ such that $\eta(\delta) \rightarrow 0$ and 
that $\delta^2/\eta(\delta)$ remains bounded as $\delta \rightarrow 0$, guarantees regularization in the sense that
\begin{equation*}
\lim_{\delta \rightarrow 0} \| \X (f_{\eta(\delta)} - \fex)\|_{L^2(\F)}=0.
\end{equation*}
In particular the choice $\eta = \delta^2/E^2$ leads to
\begin{align}
 \| \X (f_\eta - \fex) \|_{L^2(\F)}  &\leq (1+\sqrt{2}) \frac{\delta e^{\alpha N_\mu}}{A} \nonumber \\
&+ (1+\sqrt{2}) E \B \left \{ \frac{\delta}{A V_\mu E} \right \}^{\bb / \alpha} \, \left (\frac{\alpha}{(\alpha - \bb) \, \sqrt{e^{2 \bb}-1}} \right ).
\label{boundTikho}
\end{align}
provided $\delta$ is small enough such that $N(\delta) > N_\mu$ in (\ref{optmseN}).
\end{corollary}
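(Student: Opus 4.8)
Since $\eta>0$, the functional $\Phi_\eta$ is strictly convex and coercive on $L^2(\F)$, so the minimizer $\feta$ exists and is unique. The plan is to reduce the corollary to Proposition \ref{prop1}: the estimates (\ref{I1})--(\ref{boundmse}) use nothing about $f_a,f_b$ beyond the two a priori bounds $\|g_a-g_b\|_{L^2(\G)}\le2\delta$ and $\|f_a-f_b\|_{L^2(\F)}\le2E$, so it suffices to show that $\feta$ and $\fex$ obey analogous bounds and then to rerun the three-term splitting. The key input is the variational inequality $\Phi_\eta(\feta)\le\Phi_\eta(\fex)$. Using $H_T\fex=\gex$, $\|\gex-g\|_{L^2(\G)}\le\delta$ and $\|\fex\|_{L^2(\F)}\le E$ it gives
\[
\|H_T\feta-g\|_{L^2(\G)}^2+\eta\|\feta\|_{L^2(\F)}^2\le\|\gex-g\|_{L^2(\G)}^2+\eta\|\fex\|_{L^2(\F)}^2\le\delta^2+\eta E^2,
\]
and dropping either nonnegative term on the left yields $\|H_T\feta-g\|_{L^2(\G)}\le\sqrt{\delta^2+\eta E^2}$ and $\|\feta\|_{L^2(\F)}\le\sqrt{\delta^2/\eta+E^2}$.

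For the regularization claim I would take $\eta=\eta(\delta)\to0$ with $\delta^2/\eta(\delta)$ bounded. Then $\|\feta\|_{L^2(\F)}$ stays bounded while $\|H_T\feta-g\|_{L^2(\G)}\to0$, so $\delta':=\|H_T(\feta-\fex)\|_{L^2(\G)}\le\|H_T\feta-g\|_{L^2(\G)}+\delta\to0$ and $E':=\|\feta-\fex\|_{L^2(\F)}\le E'_0$ for some constant $E'_0$. Applying the splitting (\ref{f1mf2}) to $\feta-\fex$ and estimating as in (\ref{I1})--(\ref{boundmse}) with $2\delta,2E$ replaced by $\delta',E'$ gives, for every fixed $N>N_\mu$,
\[
\|\X(\feta-\fex)\|_{L^2(\F)}\le\frac{\delta'}{\sigma_{N_\mu}}+\frac{\delta'\B}{A}\,\frac{e^{(\alpha-\bb)N}}{\sqrt{1-e^{-2(\alpha-\bb)}}}+\frac{E'\B\,e^{-\bb N}}{\sqrt{e^{2\bb}-1}}.
\]
Holding $N$ fixed and letting $\delta\to0$ kills the first two terms, leaving $\limsup_{\delta\to0}\|\X(\feta-\fex)\|_{L^2(\F)}\le E'_0\B\,e^{-\bb N}/\sqrt{e^{2\bb}-1}$; since $N>N_\mu$ is arbitrary, letting $N\to\infty$ forces the limsup to vanish, which is the asserted convergence.

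For the explicit estimate I would set $\eta=\delta^2/E^2$. The displayed inequality becomes $\|H_T\feta-g\|_{L^2(\G)}^2+\eta\|\feta\|_{L^2(\F)}^2\le2\delta^2$, hence $\|H_T\feta-g\|_{L^2(\G)}\le\sqrt2\,\delta$ and $\|\feta\|_{L^2(\F)}\le\sqrt2\,E$, so $\delta'\le(1+\sqrt2)\delta$ and $E'\le(1+\sqrt2)E$. Since the three-term bound above is increasing in $\delta'$ and $E'$, I may insert these upper bounds and then minimize over $N$ (treated as a real variable, as in Proposition \ref{prop1}). The resulting function of $N$ is exactly the one in (\ref{boundmse}) with the common factor $2$ replaced by $(1+\sqrt2)$; as this factor cancels in the stationarity condition, the optimal index is again (\ref{optmseN}) and the whole minimized bound equals $\tfrac{1+\sqrt2}{2}$ times the right-hand side of (\ref{boundf1f2}). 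Bounding the leading term through P2, $1/\sigma_{N_\mu}\le e^{\alpha N_\mu}/A$, this is precisely (\ref{boundTikho}), valid once $N(\delta)>N_\mu$.

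The genuinely non-mechanical point is the regularization claim: for a general admissible $\eta(\delta)$ one cannot simply feed the coupled optimal index into the bound, since $N(\delta')$ need not exceed $N_\mu$ (and $E'$ may even tend to $0$). The decoupled ``fix $N$, send $\delta\to0$, then $N\to\infty$'' argument circumvents this and is where I would be most careful. Everything else is the bookkeeping already performed in (\ref{I1})--(\ref{boundmse}): in particular one must only verify that the optimal index (\ref{optmseN}) and the constant $V_\mu$ are unchanged by the prefactor, which holds because that factor multiplies the $\delta'$- and $E'$-terms alike and thus drops out of the minimization.
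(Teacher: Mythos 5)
Your proof is correct and follows essentially the line the paper intends: the paper omits the proof, stating only that it ``follows the same line as the proof of Proposition \ref{prop1},'' and your argument fills in exactly those details --- the variational inequality $\Phi_\eta(\feta)\le\Phi_\eta(\fex)$ giving $\|H_T\feta-g\|_{L^2(\G)}\le\sqrt{2}\,\delta$ and $\|\feta\|_{L^2(\F)}\le\sqrt{2}\,E$ for $\eta=\delta^2/E^2$, the triangle inequality producing the factor $(1+\sqrt{2})$, and the observation that this common prefactor drops out of the minimization so that (\ref{optmseN}) and $V_\mu$ are unchanged --- which is confirmed by the fact that you recover precisely the constants in (\ref{boundTikho}). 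Your decoupled ``fix $N$, send $\delta\to0$, then $N\to\infty$'' argument for the general parameter choice $\eta(\delta)$ is also sound and is the natural way to establish the convergence claim without needing $N(\delta)>N_\mu$.
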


The exponential decay of the singular values of the truncated Hilbert transform (see equation (\ref{sigmaasympt})) is typical of severely ill-posed
inverse problems. Equations  (\ref{boundf1f2}, \ref{boundmse1}, \ref{boundTikho}), however, give a H\"older continuity expected for mildly ill-posed problems. This mild ill-posedness
is due to the fact that the error is only calculated over a restricted interval $(a_2,a_3-\mu)$, which excludes a neighborhood of the
limit $a_3$ of the segment where the Hilbert transform is known. The H\"older  error bound does not hold in the limit $\mu \rightarrow 0$; the squared norm $\|  \chi_{\mu=0} u_n \|_{L^2(\F)}^2$ decays as $O(1/n)$ (equation (8.6) in \cite{adk}), 
too slowly to balance the exponential decay of $\sigma_n$.

The error bounds obtained have the same dependence on $\delta^{\bb/ \alpha}$. This behavior is determined essentially by the exponential decay rates $\alpha$ and $\bb$, but also by the assumed regularizing prior knowledge $\| f \|_{L^2(\F)} \leq E$. Figure \ref{fig2} illustrates the change in the rate $\delta^{\bb/ \alpha}$ as the amount of overlap between the two intervals $\mathcal F$ and $\mathcal G$ is varied. In the following section, we seek to find the H\"older exponent using a non-quadratic regularizing penalty, or more precisely, a total variation (TV) penalty. 
It turns out that this yields the same dependence on $\delta^{\bb/ \alpha}$.
\begin{figure}[h!]
\begin{center}
\includegraphics[angle=0,width=0.45\linewidth]{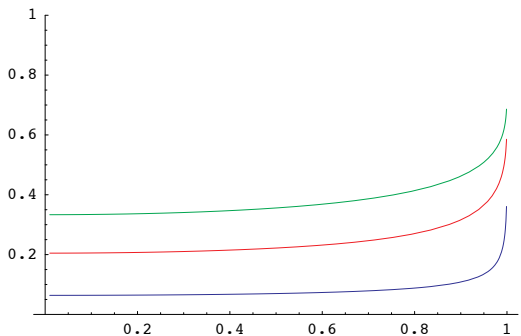}
\caption{\footnotesize The H\"older power $\beta_\mu/\alpha$ is plotted for $0 < a_3 < 1$, and for fixed
$a_1=-1,a_2=0,a_4=1$ to illustrate the dependence of the convergence rate of the
regularized solution on the size of the overlap region $(a_2,a_3)$ relative to the function support $(a_2,a_4)=(0,1)$. The green, red and blue curves correspond to $\mu= 0.25 \cdot a_3, 0.1 \cdot a_3$ and $0.01 \cdot a_3$.}
\label{fig2}
\end{center}
\end{figure}
\section{Inversion of the truncated Hilbert transform: Stability through a TV penalty}

In this section, we drop the condition $\|f_{ex}\|_{L^2(\F)}\leq E$ and instead consider regularization of the inversion problem by assuming prior knowledge on the variation of $f_{ex}$ in the form of the upper bound $|\fex|_{TV} \leq \kappa$ for some $\kappa > 0$. Additionally, we assume that $f_{ex}$ vanishes at $a_2$ and $a_4$. 
This latter assumption is not too restrictive; for if $\fex$ does not vanish at the boundaries, we can always artificially enlarge the interval $\F = (a_2,a_4)$ such that the support of $\fex$ is a strict subset of it. 

We define the set of {\it admissible solutions} as
\begin{equation}
\mathcal{S} = \{ f \in BV(\F): \| H_T f - g \|_{L^2(\G)} \leq \delta, \ |f|_{TV} \leq \kappa \text{ and } f \text{ vanishes at } a_2, a_4\}.
\label{setadmissibletv}
\end{equation}
\begin{proposition}\label{prop2}
Let $H_T \fex = \gex$ and assume $|\fex|_{TV} \leq \kappa$ and $\fex(a_2)=\fex(a_4)=0$. 
We consider the reconstruction of $\fex$ from a noisy measurement $g$ for which $\|g-\gex\|_{L^2(\G)} \leq \delta$ is given. 
Then, any algorithm that for given $\delta$, $\kappa $ and $g$ computes a solution in $\mathcal S$, is a regularization method for the reconstruction on $(a_2,a_3-\mu)$. 
More precisely, given any two admissible solutions $f_a, f_b \in \mathcal S$ and for sufficiently small $\delta$, one has the following bound:
\begin{equation}
\| \chi_\mu (f_a-f_b)\|_{L^2(\F)} \leq 2 \delta A^{-1} e^{\alpha N_\mu} + \frac{2 c}{N_\mu} B_\mu \kappa^{\frac{\alpha-\beta_\mu}{\alpha}} \Big( \frac{\delta}{A W_\mu}\Big)^{\frac{\beta_\mu}{\alpha}} \frac{\alpha}{(\alpha-\beta_\mu)(e^{\beta_\mu}-1)},\label{roi-stab-tv}
\end{equation}
Here, $c$ is a constant only dependent on $a_1, \dots, a_4$ and $W_\mu$ is a constant only dependent on $\alpha, \bb, c$ and $N_\mu$.
\end{proposition}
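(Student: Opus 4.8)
The plan is to follow the same architecture as the proof of Proposition \ref{prop1}. I would take two admissible solutions $f_a, f_b \in \mathcal S$, set $g_a = H_T f_a$ and $g_b = H_T f_b$ so that $\|g_a - g_b\|_{L^2(\G)} \leq 2\delta$, and expand $f_a - f_b$ in the singular system, splitting the series at $N_\mu$ and at a free index $N > N_\mu$ exactly as in (\ref{f1mf2}). The first two blocks $I_1$ and $I_2$ use only the data discrepancy $\|g_a - g_b\|_{L^2(\G)} \leq 2\delta$ together with properties P2 and P3, so their estimates (\ref{I1}) and (\ref{I2}) transfer verbatim: $I_1$ produces the term $2\delta A^{-1} e^{\alpha N_\mu}$ and $I_2$ the contribution proportional to $\delta\, e^{(\alpha-\bb)N}$. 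All of the new work is concentrated in the tail block $I_3 = \| \X \sum_{n>N} \lan f_a - f_b, u_n\ran u_n \|_{L^2(\F)}$, where the a priori $L^2$ control of $\|f_a - f_b\|$ must be replaced by the total variation bound.

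The key idea for $I_3$ is to trade one power of $n$ for the derivative via integration by parts. Writing $f = f_a - f_b$, which vanishes at $a_2$ and $a_4$ and satisfies $|f|_{TV} \leq |f_a|_{TV} + |f_b|_{TV} \leq 2\kappa$, I would introduce the antiderivative $U_n(x) = \int_{a_2}^x u_n(t)\,dt$ and integrate by parts to obtain $\lan f, u_n\ran = -\int_{a_2}^{a_4} U_n\, df$. The boundary term $[f\,U_n]_{a_2}^{a_4}$ vanishes precisely because $f(a_2) = f(a_4) = 0$ and $U_n(a_2)=0$, which is exactly why the proposition imposes the vanishing hypothesis on $\fex$. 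Hence $|\lan f, u_n\ran| \leq \|U_n\|_{L^\infty(\F)}\,|f|_{TV} \leq 2\kappa\,\|U_n\|_{L^\infty(\F)}$, and the triangle inequality gives $I_3 \leq 2\kappa \sum_{n>N} \|U_n\|_{L^\infty(\F)} \|\X u_n\|_{L^2(\F)}$.

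The crux, and what I expect to be the main obstacle, is the decay estimate $\|U_n\|_{L^\infty(\F)} \leq c/n$ for a constant $c$ depending only on $a_1,\dots,a_4$. Heuristically this should hold because $u_n$ is oscillatory with $O(n)$ oscillations on $(a_3,a_4)$, so its running integral averages out to size $O(1/n)$, while on $(a_2,a_3)$ the function is monotone and exponentially small away from the integrable logarithmic singularity at $a_3$. To make this rigorous I would invoke the WKB-type asymptotics of the singular functions from \cite{adk,a-k} (the same asymptotics underlying (\ref{asymptfn2}) and Lemma \ref{lemma2}), controlling the oscillatory and monotone regimes and the neighborhood of the turning points separately. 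Granting this, and using $\|\X u_n\|_{L^2(\F)} \leq \B e^{-\bb n}$ together with $1/n \leq 1/N_\mu$ on the tail $n > N > N_\mu$, the geometric sum yields $I_3 \leq \frac{2c\kappa \B}{N_\mu}\,\frac{e^{-\bb N}}{e^{\bb}-1}$.

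Finally I would combine $I_1 + I_2 + I_3$ into a bound of the form $2\delta A^{-1} e^{\alpha N_\mu} + a\,e^{(\alpha-\bb)N} + b\,e^{-\bb N}$ with $a \propto \delta$ and $b \propto \kappa/N_\mu$, and minimize this convex function of $N$ exactly as in (\ref{optmseN}). Since the two exponents sum to $\alpha$, the optimum is again of the form $N(\delta) = \alpha^{-1}\log(\cdots)$, and substituting it back balances the two $N$-dependent terms; the standard computation $\min_N(a e^{(\alpha-\bb)N} + b e^{-\bb N}) \propto a^{\bb/\alpha} b^{(\alpha-\bb)/\alpha}$ then produces the factors $\delta^{\bb/\alpha}$ and $\kappa^{(\alpha-\bb)/\alpha}$, with the constant $\W$ absorbing $\alpha$, $\bb$, $c$ and $N_\mu$, which is (\ref{roi-stab-tv}). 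The regularization assertion then follows as in Proposition \ref{prop1} by applying the bound with $f_b = \fex$ and letting $\delta \to 0$.
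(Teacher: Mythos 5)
Your proposal is structurally identical to the paper's proof: the same splitting (\ref{f1mf2}) at $N_\mu$ and a free index $N$, the bounds (\ref{I1}) and (\ref{I2}) for $I_1$ and $I_2$ carried over verbatim, a tail bound of the form $I_3 \leq \frac{2c\kappa\B}{N_\mu}\,\frac{e^{-\bb N}}{e^{\bb}-1}$, and the same convex minimization over $N$ producing the exponents $\bb/\alpha$ and $(\alpha-\bb)/\alpha$ and the constant $\W$. The single point of divergence is how the crucial coefficient decay is obtained. The paper does not prove it at all: it invokes Lemma 6.1 of \cite{aps}, which asserts exactly that $|\lan f_a-f_b, u_n\ran| \leq c\,|f_a-f_b|_{TV}/n$ for $n > N_0$, with $c$ depending only on $a_1,\dots,a_4$, for BV functions vanishing at $a_2$ and $a_4$. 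You instead attempt to derive this estimate yourself, integrating by parts against the antiderivative $U_n(x)=\int_{a_2}^x u_n(t)\,dt$ --- correctly identifying, in passing, that the hypothesis $\fex(a_2)=\fex(a_4)=0$ exists precisely to kill the boundary term --- and reducing everything to the claim $\|U_n\|_{L^\infty(\F)} \leq c/n$. That reduction is the right mechanism, but your justification of the claim is, as you yourself flag, heuristic: the WKB asymptotics of Lemma \ref{lemma1} are valid only on $(a_2+O(\varepsilon^{1+2\zeta}), a_3-O(\varepsilon^{1+2\zeta}))$, so controlling the amplitude of $u_n$ near its logarithmic singularity at $a_3$, and turning the ``averaging out'' of the $O(n)$ oscillations on $(a_3,a_4)$ into a rigorous cancellation bound (including the turning-point regions), is genuinely technical work --- without the cancellation the oscillatory region alone contributes $O(1)$, not $O(1/n)$. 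So your write-up contains one real gap, but it is exactly the gap the paper fills with a citation: replace your sketched antiderivative lemma by the reference to Lemma 6.1 of \cite{aps}, and your argument coincides with the paper's proof.
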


\begin{proof}
Let $g_a = H_T f_a$ and $g_b = H_T f_b$. Then, we can write the same identity as in (\ref{f1mf2}) which is exact for any choice of $N > N_\mu$. 
Lemma 6.1 in \cite{aps} implies the following decay rate for sufficiently large index $n$: there exists a constant $c$ only dependent on $a_1, \dots, a_4$ such that for $n > N_0$,
$$| \langle f_a-f_b, u_n \rangle| \leq c \frac{|f_a-f_b|_{TV}}{n}.$$
Similarly to the derivations in the previous section, we can obtain an upper bound on $\| \chi_\mu (f_a-f_b) \|_{L^2(\F)} \leq I_1+I_2+I_3$, where $I_1$ and $I_2$ are bounded as in (\ref{I1}) and (\ref{I2}), respectively. An upper bound on the last term is obtained as follows:
\begin{align*}
I_3 &= \| \sum_{n=N+1}^{\infty} \langle f_a - f_b, u_n \rangle \chi_\mu u_n \|_{L^2(\F)}  \leq  \sum_{n=N+1}^{\infty} | \langle f_a - f_b, u_n \rangle | \| \chi_\mu u_n \|_{L^2(\F)} \\
& \leq  c |f_a-f_b|_{TV} \sum_{n=N+1}^{\infty} \frac{1}{n} \| \chi_\mu u_n\|_{L^2(\F)}  \leq c |f_a-f_b|_{TV} B_\mu  \sum_{n=N+1}^{\infty} \frac{e^{-\beta_\mu n }}{n} \\
& \leq  \frac{2 c \kappa B_\mu e^{- \beta_\mu N}}{N_\mu(e^{\beta_\mu}-1)}. 
\end{align*}

The last inequality in the above is not sharp but simplifies subsequent derivations. With this we obtain
\begin{align*}
\| \chi_\mu(f_a-f_b)\|_{L^2(\F)} & \leq \frac{2 \delta}{\sigma_{N_\mu}} +\frac{2 \delta B_\mu}{A}  \frac{e^{(\alpha-\beta_\mu)N}}{(1-e^{-2(\alpha-\beta_\mu)})^{1/2}} + \frac{2 c \kappa B_\mu e^{- \beta_\mu N}}{N_\mu(e^{\beta_\mu}-1)}. 
\end{align*}
The value $N$ that minimizes this upper bound is found to be
\begin{equation*}
N(\delta) = \frac{1}{\alpha} \log \Big( \frac{\kappa A W_\mu}{\delta}\Big),
\end{equation*}
where
$$W_\mu = \frac{\beta_\mu}{\alpha-\beta_\mu}\frac{(1-e^{-2(\alpha-\beta_\mu)})^{1/2}c }{N_\mu (e^{\beta_\mu}-1)}.$$
This choice yields (\ref{roi-stab-tv}). This bound is valid if $N(\delta) > N_\mu$, which can be reformulated as an upper bound on the ratio $\delta/\kappa$:
\begin{equation}
\frac{\delta}{\kappa} < A W_\mu e^{-\alpha N_\mu}. \label{ntv-ratio}
\end{equation} 
To conclude the proof, consider any algorithm that for each $\delta, \kappa$ and $g$ produces a solution $f_a  \in {\mathcal S}$. The bound (\ref{roi-stab-tv}) can be
applied with $f_b = f_{ex}$ because $f_{ex} \in {\mathcal S}$, and since the right-hand side in (\ref{roi-stab-tv}) tends to zero as $\delta \to 0$, this
algorithm is a regularizing method for the reconstruction on $(a_2,a_3-\mu)$.
\end{proof}
\textbf{Remark.} The bounds in (\ref{roi-stab-tv}) together with (\ref{ntv-ratio}) imply that the error within the ROI tends to zero as $\kappa \to 0$ even for constant noise level $\delta$. This is only due to the prior assumption that $\fex$ vanishes at $a_2$ and $a_4$. Together with $|\fex|_{TV}=0$ this assumption implies that $f \equiv 0$. 
Any stability estimate that only requires an upper bound on $|f_{ex}|_{TV}$ without assuming that $\fex$ vanishes at the boundaries, will not tend to zero as $\kappa \to 0$. 

\subsection{Reconstruction on the entire interval}

So far, we have found stability estimates for the reconstruction on the overlap region (or region of interest) while stable reconstruction outside of the region of interest has not been guaranteed. One might ask whether it is possible to guarantee stable reconstruction on all of $\mathcal F = (a_2,a_4)$. If the assumed prior knowledge is of the form  $\|\fex\|_{L^2(\F)} \leq E$, such an estimate cannot hold in general. This can be seen for example by taking the sequence of singular functions $u_n$ for which $\|u_n\|_{L^2(\F)}=1$ while $\|H_T u_n\|_{L^2(\G)} \to 0$.

In \cite{aps} it has been shown that stability on all of $\F = (a_2,a_4)$ can be guaranteed if the variation of the solution is assumed to be bounded, i.e. $|f|_{TV} \leq \kappa$, for some $\kappa>0$. The estimate obtained is of logarithmic continuity (as opposed to H\"{o}lder continuity). This is typical for severely ill-posed problems.

We seek to derive such an estimate employing the methods applied in the previous sections. More precisely, we find an upper bound on $\|f_a-f_b\|_{L^2(\F)}$ for $f_a, f_b \in \mathcal{S}$ where $\mathcal{S}$ is the set of admissible solutions in (\ref{setadmissibletv}). For this, we write
\begin{align*}
\|f_a-f_b\|_{L^2(\F)} \leq I_1 + I_2,
\end{align*}
where
\begin{align*}
I_1 &= \| \sum_{n=-\infty}^N \langle g_a-g_b, v_n\rangle \frac{1}{\sigma_n} u_n \|_{L^2(\F)}, \qquad I_2 = \| \sum_{n=N+1}^\infty \langle f_a-f_b, u_n \rangle u_n \|_{L^2(\F)},
\end{align*}
and $N > N_0$. As before, $I_1 \leq \frac{2\delta}{\sigma_N}$.
For an upper bound on $I_2$, we again apply Lemma 6.1 in \cite{aps} to obtain
\begin{align*}
I_2^2 = \sum_{n=N+1}^\infty |\langle f_a-f_b, u_n \rangle|^2 \leq c^2 (2 \kappa)^2 \sum_{n=N+1}^\infty \frac{1}{n^2} \leq c^2 (2 \kappa)^2 \frac{1}{N}.
\end{align*}
This yields
\begin{equation*}
\|f_a-f_b\|_{L^2(\F)} \leq 2 \delta A^{-1} e^{\alpha N} + 2 \kappa c \frac{1}{\sqrt{N}}.
\end{equation*}
The value $N=N(\delta)$ that minimizes the right-hand side in the above satisfies
$$2 \delta A^{-1} \alpha e^{\alpha N(\delta)} - \kappa c N(\delta)^{-3/2} = 0$$
and therefore
$$\alpha N(\delta) + \frac{3}{2} \log N(\delta) = \log \Big( \frac{A \kappa c}{2 \delta \alpha}\Big).$$
As a consequence, the optimal $N(\delta)$ satisfies
$$(\alpha+\frac{3}{2}) N(\delta) \geq \log \Big( \frac{A \kappa c}{2 \delta \alpha}\Big) \geq \alpha N(\delta)$$
and hence,
$$N(\delta)^{-\frac{1}{2}} \leq (\alpha+\frac{3}{2})^{\frac{1}{2}}  \Big[ \log \Big( \frac{A \kappa c}{2 \delta \alpha}\Big) \Big]^{-\frac{1}{2}}.$$
Thus,
\begin{equation}
\| f_a-f_b\|_{L^2(\F)} \leq \frac{\kappa c}{\alpha} N(\delta)^{-\frac{3}{2}} + 2 \kappa c N(\delta)^{-\frac{1}{2}} \leq \kappa c (\frac{1}{\alpha}+2) N(\delta)^{-\frac{1}{2}} 
\leq \kappa C \big[ \log(\frac{\kappa}{\delta}) + D \big]^{-\frac{1}{2}} \label{stab-tv}
\end{equation}
for some constants $C>0$ and $D$ that depend on $a_1, a_2, a_3, a_4$. This bound holds for $N(\delta) > N_0$, for which
$$\frac{\delta}{\kappa} < \frac{A c}{2 \alpha} e^{-(\alpha+\frac{3}{2})N_0}$$
is a sufficient condition. Note that by this requirement, $\log(\frac{\kappa}{\delta}) + D > 0$.

\section{Numerical validation of the asymptotic expressions in (\ref{asymptfn2})}

We have calculated the SVD of the truncated Hilbert transform with $a_1=0, a_2=450,a_3=1350, a_4=1725$. The problem has been discretized with a sampling step equal to $1$ and a shift of $1/2$ between the object and data samples. Mathematica (using $20$ significant digits) finds 
$910$ nonzero singular values for the discretized $1351 \times 1276$ Hilbert matrix ${\bf H_T}$. As observed in \cite{adk}, most singular values are very close to $1$, and only 10 singular values are smaller than $0.97$; of those the last nine are smaller than $0.01$. These last singular values are well fitted by the 
asymptotic expression $A e^{- \alpha n}$ (Figure 2a) with the constants from equation (\ref{KpKm}). 

For each singular function $u_n$ corresponding to these last $9$ values, we have computed the norm $\| \X u_n\|_{L^2(\F)}$ in 
the interval $(a_2,a_3- \mu)$, for integer values of $\mu$ between $1$ and $400$. For all $\mu$, we find that $\| \X u_n\|_{L^2(\F)}$ 
is well fitted by the asymptotic expression (\ref{asymptfn2}) as illustrated in Figure 2b. The fits are obtained by associating $n=1$ to the 
singular value $n'=902$ of the discrete SVD.

\begin{figure}[h!]
\begin{center}
\begin{tabular}{c c}
\includegraphics[angle=0,width=0.45\linewidth]{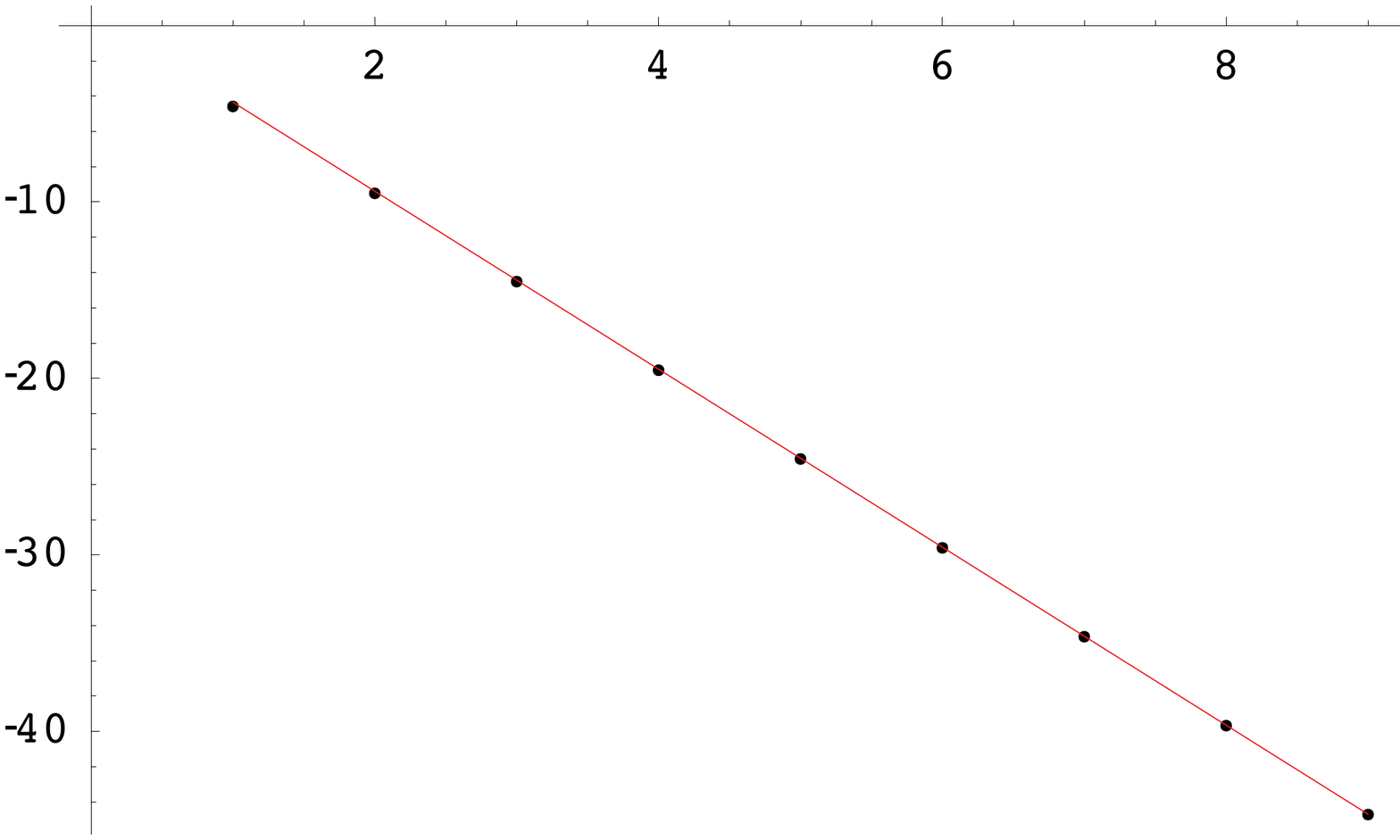} &
\includegraphics[angle=0,width=0.45\linewidth]{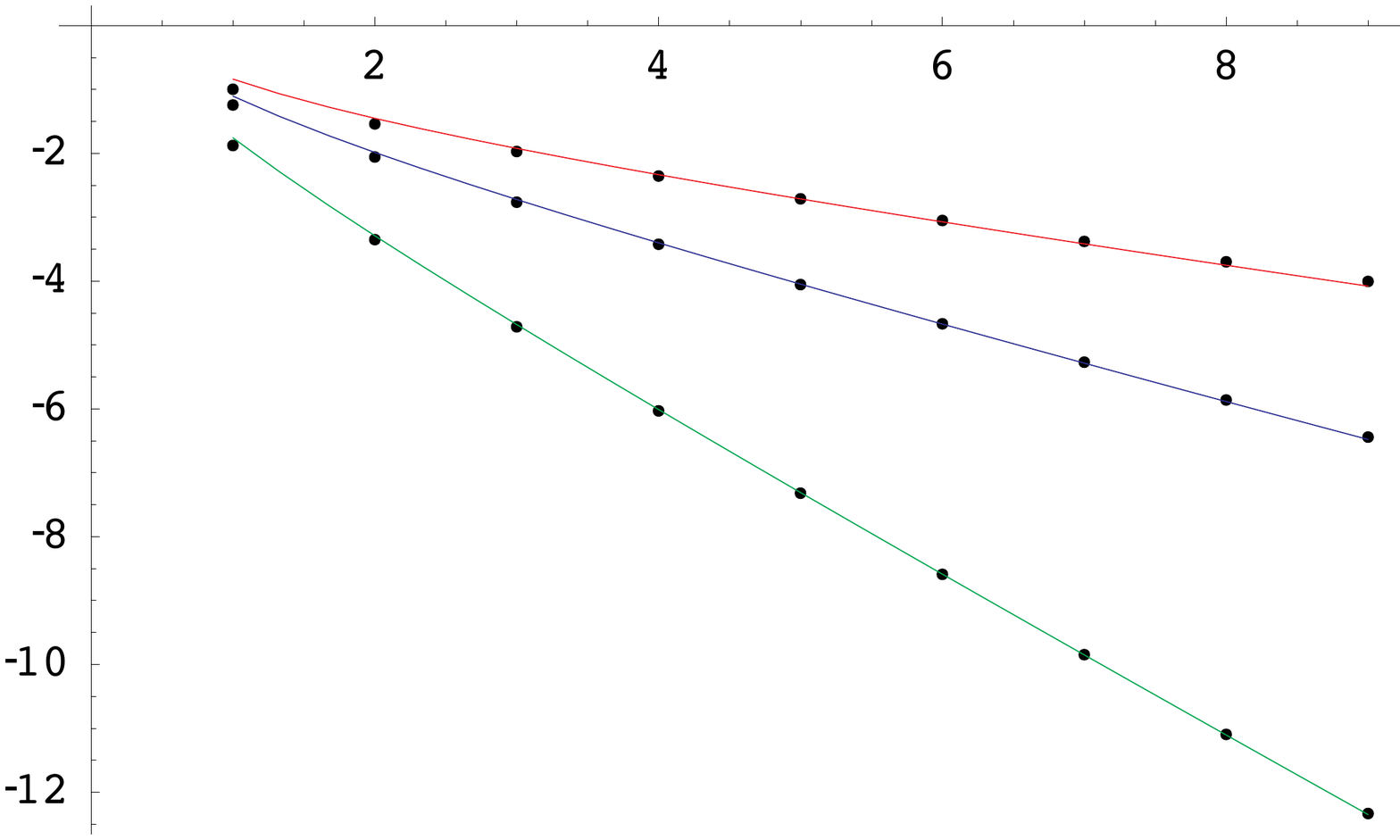}
\end{tabular}
\caption{\footnotesize {\it Left}: $\log \sigma_n$ versus $n$ for the discretized truncated Hilbert 
problem (dots) with $a_1=0, a_2=450,a_3=1350, a_4=1725$. The red line is the 
asymptotic expression (\ref{sigmaasympt}).
{\it Right}: $\log \| \X u_n \|_{L^2([a_2,a_4])}$ versus $n$ for $\mu=5$ (red), $\mu=20$ (blue) and $\mu=100$ (green). 
The lines are the corresponding asymptotic expressions (\ref{asymptfn2}).
The plots show the last nine singular components.}
\label{fig1}
\end{center}
\end{figure}

\textbf{Acknowledgments.} RA was supported in part by a fellowship of the Research Foundation Flanders
(FWO) and AK was supported in part by NSF grant DMS-1211164. MD acknowledges support by the Strategic 
Research Programme SRP 10 of the Vrije Universiteit Brussel.

\section{Appendix}

This appendix gives the proof of equations (\ref{asymptfn2}) and (\ref{betamu}), which characterize  the asymptotic behavior as $n \rightarrow \infty$ of the integral
\begin{equation}
\|\X u_n\|_{L^2(\F)}= \left ( \int_{a_2}^{a_3-\mu} dx \, |u_n(x)|^2 \right )^{1/2}
\label{xfnmu}
\end{equation}
where $u_n$ is the $n$-th singular function of the truncated Hilbert transform with overlap $H_T$.
The proof relies heavily on the results from \cite{adk,a-k}. These findings rely on the fact that the operator $H_T$ shares an intertwining property with second-order differential operators $L_S$ and $\tilde{L}_S$ \cite{a-k}. It implies that the singular functions $u_n$ and $v_n$ are specially constructed solutions to
\begin{equation}\label{Lsfn}
P(x) f''(x) + P'(x) f'(x) =\left ( \lambda_n - 2 (x-\sigma)^2 \right ) f(x)
\end{equation}
where $\lambda_n \to \infty$ as $n \to \infty$ and $\lambda_n \to -\infty$ as $n \to -\infty$ (see \cite{adk,a-k} for details). Below we omit detailed references to these papers, to which the reader is referred  for all properties 
stated without explanation. 
\begin{lemma}\label{lemma1}
For small $\zeta > 0$ the asymptotic behavior as $n \to \infty$ of the singular function in $x \in (a_2+O(\varepsilon^{1+2 \zeta}), a_3-O(\varepsilon^{1+2 \zeta}))$ is
\begin{equation}
u_n(x) = \sqrt{\frac{2}{K_-}}  \frac{(-1)^{n+1}}{(P(x))^{1/4}} e^{-w_3(x)/\varepsilon} (1+ O(\varepsilon^{1/2- \zeta}))
 \label{gn6}
\end{equation}
\begin{equation}
\mbox{with  } w_3(x) = \int_{x}^{a_3}\frac{dt}{\sqrt{P(t)}}
\label{wdef}
\end{equation}
and where $\varepsilon$ is related to $n$ by 
\begin{equation}
\varepsilon = \frac{K_-}{n \pi} (1 + O \big( n^{-1/2+\zeta} \big)).
\label{discrete-eps}
\end{equation}
\end{lemma}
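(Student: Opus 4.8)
The plan is to regard the defining relation (\ref{Lsfn}) as a linear second-order ODE with a large parameter and to read off its Liouville--Green (WKB) asymptotics on $(a_2,a_3)$, where $P(x)>0$. Writing (\ref{Lsfn}) in self-adjoint form $(P u_n')' = (\lambda_n - 2(x-\sigma)^2)\,u_n$ and recalling that $\lambda_n\to+\infty$ while $2(x-\sigma)^2$ stays bounded on the compact interval, the right-hand coefficient behaves like $\lambda_n(1+O(\lambda_n^{-1}))$, so the equation is of non-oscillatory (exponential) type on $(a_2,a_3)$. With $\varepsilon$ essentially $\lambda_n^{-1/2}$ (the precise link to $n$ being (\ref{discrete-eps})), the two independent local behaviours should be $e^{\pm w_3(x)/\varepsilon}$ with $w_3$ as in (\ref{wdef}), and the goal is to show that $u_n$ is asymptotically a specific normalized multiple of the one recessive at $a_2$.

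First I would reduce the equation to Liouville normal form. Dividing by $P$ and substituting $u_n = P^{-1/2}v$ removes the first-derivative term and yields $v'' = \big((\lambda_n - 2(x-\sigma)^2)/P + \rho(x)\big)v$, where $\rho = \tfrac14 (P'/P)^2 + \tfrac12 (P'/P)'$ is independent of $n$ and bounded on compact subsets of $(a_2,a_3)$. For large $\lambda_n$ the bracket is $\approx \lambda_n/P>0$, and the standard Liouville--Green approximation produces the two solutions $v \sim (\lambda_n/P)^{-1/4}\exp\!\big(\pm\sqrt{\lambda_n}\int dt/\sqrt{P}\big)$; undoing the substitution gives $u_n \sim \mathrm{const}\cdot (P(x))^{-1/4}\,e^{\pm w_3(x)/\varepsilon}$, in which the prefactor $(P(x))^{-1/4}$ of (\ref{gn6}) appears and the factor $\lambda_n^{-1/4}$ is absorbed into the constant. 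The bounded term $2(x-\sigma)^2$ only shifts the exponent by $O(\varepsilon)$, which is harmless.

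Next I would single out the correct solution and fix its amplitude and sign. Since $u_n$ is bounded at $a_2$ and joins the oscillatory behaviour on $(a_1,a_2)$, it must be recessive at $a_2$, i.e. the multiple of $(P(x))^{-1/4}e^{-w_3(x)/\varepsilon}$; this is exponentially small at $a_2$ (where $w_3(a_2)=K_+$), grows monotonically towards $a_3$, and develops the logarithmic turning-point singularity there, matching the stated qualitative properties. To determine the constant $\sqrt{2/K_-}$ and the sign $(-1)^{n+1}$ I would invoke the turning-point connection formulae and node count of \cite{adk,a-k}: the quantization condition tying $\lambda_n$ to $n$ through the oscillatory region reads $K_-/\varepsilon \approx n\pi$ and yields (\ref{discrete-eps}), the node count produces the alternating sign $(-1)^{n+1}$, and the normalization $\|u_n\|_{L^2(\F)}=1$ then fixes the amplitude $\sqrt{2/K_-}$.

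The main obstacle is the uniform control of the error up to the turning points $a_2$ and $a_3$, which are simple zeros of $P$ and hence \emph{regular singular points} of (\ref{Lsfn}) (whence the logarithmic singularity and a Bessel-type, rather than Airy-type, transition layer). Plain Liouville--Green estimates degrade near these points: the error-control integral behaves like $\varepsilon/\sqrt{a_3-x}$ near $a_3$ (and $\varepsilon/\sqrt{x-a_2}$ near $a_2$), because the transition layer, where $w_3(x)/\varepsilon = O(1)$, has width $a_3-x = O(\varepsilon^2)$. Staying a distance $O(\varepsilon^{1+2\zeta})$ away from the endpoints therefore keeps the relative error at $\varepsilon/\sqrt{\varepsilon^{1+2\zeta}} = O(\varepsilon^{1/2-\zeta})$, which is exactly the admissible interval and error stated in the lemma; the delicate matched-asymptotics estimates guaranteeing this are those of \cite{adk,a-k}. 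Combining the recessive solution, its normalized amplitude and sign, and this uniform error bound yields (\ref{gn6}).
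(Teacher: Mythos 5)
Your outline correctly identifies the analytic skeleton of (\ref{gn6}) --- the Liouville--Green form $P^{-1/4}e^{\pm w_3(x)/\varepsilon}$, the Bessel-type (not Airy-type) nature of the transition points, the $O(\varepsilon^2)$ layer width, and the fact that staying $O(\varepsilon^{1+2\zeta})$ away from the endpoints yields a relative error $O(\varepsilon^{1/2-\zeta})$ --- but it does not actually produce the quantitative content of the lemma, and one of its supporting claims is false. First, $u_n$ does not ``join the oscillatory behaviour on $(a_1,a_2)$'': $u_n$ is supported on $\F=(a_2,a_4)$ and its oscillatory region is $(a_3,a_4)$; the oscillation on $(a_1,a_2)$ belongs to $v_n$. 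The selection of the correct branch can still be justified (the solution of (\ref{Lsfn}) bounded at $a_2$ is locally of Bessel $I_0$ type, hence matches the exponential growing away from $a_2$, i.e.\ $e^{w_2(x)/\varepsilon}\propto e^{-w_3(x)/\varepsilon}$, while the other local solution is $K_0$-type and log-singular at $a_2$), but not by the argument you give. Second, and more seriously, the constant $\sqrt{2/K_-}$, the sign $(-1)^{n+1}$ and the relation (\ref{discrete-eps}) are exactly the hard part, and your plan for them does not close. Since $u_n$ is exponentially small on $(a_2,a_3)$, the normalization $\|u_n\|_{L^2(\F)}=1$ is carried almost entirely by the oscillatory part on $(a_3,a_4)$; to convert it into the amplitude of $e^{-w_3(x)/\varepsilon}$ you need (i) a connection formula across $a_3$, which is delicate precisely because $a_3$ is a regular singular point at which $u_n$ itself is the log-singular ($K_0$-type) solution, and (ii) the period identity $\int_{a_3}^{a_4}dt/\sqrt{-P(t)}=\int_{a_1}^{a_2}dt/\sqrt{-P(t)}=K_-$, without which the averaged $\cos^2$ does not give the stated constant. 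Neither step appears in your proposal. Moreover, the sign of $u_n$ is only defined relative to $v_n$ (the pair $(u_n,v_n)$ can be flipped simultaneously), so no analysis of the ODE for $u_n$ alone --- node counting included --- can yield $(-1)^{n+1}$; some coupling between $u_n$ and $v_n$ must enter.

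The paper avoids all of this by a different mechanism: by uniqueness for the Riemann--Hilbert problem and the Plemelj--Sokhotski formulas, $u_n(x)=\sigma_n\,\mathrm{Im}\,v(x+i0)$, where $v$ is the analytic continuation of $v_n$ off $(a_1,a_2)$. One then continues the already-normalized WKB form (\ref{gna1a2}) of $v_n$ across $a_2$ through the upper half plane, and uses the quantization condition $\cos(K_-/\varepsilon)=(-1)^{n+1}(1+O(\varepsilon^{1/2-\zeta}))$ together with the singular-value asymptotics $\sigma_n\approx 2e^{-K_+/\varepsilon}$, so that $\sigma_n e^{w_2(x)/\varepsilon}=2e^{-w_3(x)/\varepsilon}$. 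The amplitude, the sign and (\ref{discrete-eps}) all come out of this single computation; no connection problem at $a_3$ and no transfer of the normalization are needed. To salvage your route you would essentially have to redo the uniform Bessel-type analysis of \cite{adk} for $u_n$ on $(a_2,a_4)$, including the connection at the log-singular turning point $a_3$ and a convention-matching with $v_n$; that is a much longer path to the same statement.
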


\begin{proof}
In the interval $(a_1+O(\varepsilon^{1+2 \zeta}), a_2-O(\varepsilon^{1+2 \zeta}))$,
where $\varepsilon$ is related to $n$ by (\ref{discrete-eps}), the WKB approximation of the singular function $v_n$ can be written as (see eqns (5.12) and (8.1) in \cite{adk})
\begin{align}
v_n(x) &= \frac{1}{ \sqrt{2 K_-}}\frac{1}{(-P(x))^{1/4}} {\Big \{ } [e^{i w_1(x)/\varepsilon - i \pi/4} + e^{-i w_1(x)/\varepsilon + i \pi/4}] (1 + O(\varepsilon^{1/2- \zeta})) \nonumber \\
&+ (1/i) [e^{i w_1(x)/\varepsilon -  i \pi/4} - e^{-i w_1(x)/\varepsilon + i \pi/4}] O(\varepsilon^{1/2- \zeta}) {\Big \} }
\label{gna1a2}
\end{align}
with
\begin{equation*}
w_1(x) = \int_{a_1}^x \frac{dt}{\sqrt{-P(t)}}.
\end{equation*}
Let $v$ be the analytic continuation of $v_n$ from the interval $(a_1,a_2)$ to $\bar{\mathbb{C}} \backslash [a_2,a_4]$. Employing the uniqueness of the solution to the Riemann-Hilbert problem and the Plemelj-Sokhotski formulas, one can show (cf. eqns (4.2)-(4.8) in \cite{adk}) that
\begin{align*}
u_n(x) &= \sigma_n \frac{v(x+i0)-v(x-i0)}{2i}, \quad x \in (a_2,a_4)\backslash \{a_3\},\\
v_n(x) &= \frac{v(x+i0)+v(x-i0)}{2},  \quad x \in (a_1,a_3)\backslash \{a_2\}.
\end{align*}
Furthermore, we have that (cf. eqns (4.9), (4,10) in \cite{adk})
\begin{align}
u_n(x) &= \sigma_n \text{Im } v(x+i0),  \quad x \in (a_2,a_4)\backslash \{a_3\}, \label{ung}\\
v_n(x) &= \text{Re } v(x+i0),  \quad x \in (a_1,a_3)\backslash \{a_2\}. 
\end{align}
The analytic continuation of $-P(x)$ from $(a_1,a_2)$ to $(a_2,a_3)$ via the upper half plane is given by $-P(x) = e^{-i \pi} P(x)$. With this, the analytic continuation of $v$ via the upper half plane to $(a_2+O(\varepsilon^{1+2 \zeta}),a_3-\mu) \subset (a_2+O(\varepsilon^{1+2 \zeta}), a_3-O(\varepsilon^{1+2 \zeta}))$ can be expressed as
(see section 5.1 and Figure 2 in \cite{adk}, and \cite{k-t} for the uniform accuracy of this continuation):
\begin{align}
v(x+i0) &= \frac{1}{ \sqrt{2 K_-}}\frac{1}{(P(x))^{1/4}} e^{i \pi/4} {\Big \{ } [e^{i K_-/\varepsilon - i \pi/4} e^{i e^{i\pi/2} w_2(x)/\varepsilon} + e^{-i K_-/\varepsilon + i \pi/4} e^{-i e^{i\pi/2} w_2(x)/\varepsilon}]  
\nonumber \\ 
&\cdot (1 + O(\varepsilon^{1/2- \zeta}))\nonumber \\ 
&+ (1/i) [e^{i K_-/\varepsilon - i \pi/4} e^{i e^{i\pi/2} w_2(x)/\varepsilon} - e^{-i K_-/\varepsilon + i \pi/4} e^{-i e^{i\pi/2} w_2(x)/\varepsilon}]  O(\varepsilon^{1/2- \zeta}) {\Big \} }
\label{gn0}
\end{align}
with
\begin{equation*}
w_2(x) = K_+ - w_3(x)=\int_{a_2}^x \frac{dt}{\sqrt{P(t)}}.
\end{equation*}
Equation (\ref{gn0}) can be rewritten as:
\begin{align}
v(x+i0) &= \frac{1}{ \sqrt{2 K_-}}\frac{1}{(P(x))^{1/4}}  {\Big \{ } [e^{i K_-/\varepsilon } e^{-w_2(x)/\varepsilon} + e^{-i K_-/\varepsilon+i\pi/2} e^{w_2(x)/\varepsilon}]  
\nonumber \\ &\cdot (1 + O(\varepsilon^{1/2- \zeta}))
+ (1/i) [e^{i K_-/\varepsilon } e^{-w_2(x)/\varepsilon} - e^{-i K_-/\varepsilon+i\pi/2} e^{w_2(x)/\varepsilon}]  O(\varepsilon^{1/2- \zeta}) {\Big \} }
\label{gn2}
\end{align}
Noting that the modulus of all complex exponentials is equal to $1$ and that $e^{w_2(x)/\varepsilon} > e^{-w_2(x)/\varepsilon}$, 
one can regroup all  terms in $O(\varepsilon^{1/2- \zeta})$ in (\ref{gn2}) to:
\begin{equation}
 \frac{1}{(P(x))^{1/4}} e^{w_2(x)/\varepsilon} O(\varepsilon^{1/2- \zeta})
 \label{gn4}
\end{equation}
Therefore, (\ref{gn2}) becomes 
\begin{align}
v(x+i0) =& \frac{1}{ \sqrt{2 K_-}} \frac{1}{(P(x))^{1/4}}  \big( e^{-i K_-/\varepsilon + i \pi/2} e^{w_2(x)/\varepsilon} + e^{i K_-/\varepsilon} e^{-w_2(x)/\varepsilon} + e^{w_2(x)/\varepsilon} O(\varepsilon^{1/2- \zeta})\big)  \nonumber \\
=& \frac{1}{ \sqrt{2 K_-}} \frac{1}{(P(x))^{1/4}}  e^{w_2(x)/\varepsilon} \big( e^{-i K_-/\varepsilon + i \pi/2} + O(\varepsilon^{1/2- \zeta}) \big).
\label{gn2a}
\end{align}

Combining (\ref{ung}) and (\ref{gn2a}) one obtains,
\begin{equation}
u_n(x) = \frac{\sigma_n}{\sqrt{2 K_-}}  \frac{(-1)^{n+1}}{(P(x))^{1/4}} e^{w_2(x)/\varepsilon} (1+ O(\varepsilon^{1/2- \zeta}))
\end{equation}
where we have used $\cos (K_-/\varepsilon) =(-1)^{n+1}(1+ O(\varepsilon^{1/2- \zeta}))$ (see eq. (5.36) in \cite{adk}).
Finally, using the explicit expression (see (5.45) in \cite{adk}) for the asymptotics of the singular values, yields equation (\ref{gn6}) with $w_3(x)$ as in (\ref{wdef}).
This result holds for $x \in (a_2+O(\varepsilon^{1+2 \zeta}), a_3-O(\varepsilon^{1+2 \zeta}))$. 
\end{proof}
\begin{lemma}\label{lemma2}
For sufficiently large positive index $n$, the singular function $u_n$ is strictly monotonic on $(a_2, a_3)$.
\end{lemma}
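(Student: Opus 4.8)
The plan is to work directly with the second-order ODE (\ref{Lsfn}), which is already in Sturm--Liouville form $(P(x) u_n')' = \big(\lambda_n - 2(x-\sigma)^2\big) u_n$, and to exploit two facts that hold on $(a_2,a_3)$ for large $n$. First, $P$ is strictly positive there, since on $(a_2,a_3)$ the factors of $P(x)=(x-a_1)(x-a_2)(x-a_3)(x-a_4)$ have signs $(+)(+)(-)(-)$. Second, writing $q_n(x):=\lambda_n-2(x-\sigma)^2$, the potential $q_n$ is strictly positive on the compact interval $[a_2,a_3]$ once $\lambda_n>2\max_{[a_2,a_3]}(x-\sigma)^2$, which happens for all large $n$ because $\lambda_n\to\infty$. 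The whole argument then reduces to a maximum-principle-type observation together with an endpoint analysis at $a_2$; notably, no use of the delicate WKB asymptotics of Lemma \ref{lemma1} is required, only qualitative sign information.

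First I would record the sign constraint at interior critical points. If $u_n'(x_0)=0$ for some $x_0\in(a_2,a_3)$, evaluating (\ref{Lsfn}) at $x_0$ gives $P(x_0)u_n''(x_0)=q_n(x_0)u_n(x_0)$, so $u_n''(x_0)$ carries the same sign as $u_n(x_0)$, both $P(x_0)$ and $q_n(x_0)$ being positive. Hence a critical point with $u_n(x_0)>0$ is a strict local minimum and one with $u_n(x_0)<0$ a strict local maximum, while $u_n(x_0)=0$ would force $u_n\equiv 0$ by uniqueness and is excluded. In words, the equation forbids a positive local maximum or a negative local minimum on $(a_2,a_3)$.

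The second ingredient is the behaviour at $a_2$. Since $P$ has a simple zero there, $a_2$ is a regular singular point of (\ref{Lsfn}) with indicial equation $r^2=0$, and the two Frobenius solutions behave like a constant and like $\log(x-a_2)$. The stated boundedness of $u_n$ at $a_2$ singles out the analytic solution, so $u_n$ is smooth up to $a_2$ with $u_n(a_2)\neq 0$ (a vanishing value would force the trivial solution). Evaluating (\ref{Lsfn}) at $x=a_2$, where $P(a_2)=0$ and $u_n''(a_2)$ is finite, yields the boundary relation $P'(a_2)u_n'(a_2)=q_n(a_2)u_n(a_2)$. Because $P'(a_2)=(a_2-a_1)(a_2-a_3)(a_2-a_4)>0$ and $q_n(a_2)>0$, the derivative $u_n'(a_2)$ has the same sign $s:=\sgn u_n(a_2)\in\{+1,-1\}$ as $u_n(a_2)$. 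I would then close the argument by propagating from $a_2$: if $u_n'$ vanishes on $(a_2,a_3)$, let $x_0$ be its first zero. On $(a_2,x_0)$ the derivative keeps the sign $s$, so $u_n$ is strictly monotone there, giving $\sgn u_n(x_0)=s$; by the critical-point constraint $u_n''(x_0)$ then also has sign $s$. But as $u_n'$ approaches $0$ monotonically from the sign $s$ at its first zero, one has $s\,u_n''(x_0)\le 0$, contradicting $s\,u_n''(x_0)=|u_n''(x_0)|>0$. Hence $u_n'$ never vanishes on $(a_2,a_3)$, and $u_n$ is strictly monotone (increasing for $s=+1$, decreasing for $s=-1$, the sign being $(-1)^{n+1}$ in agreement with Lemma \ref{lemma1}).

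The routine parts here are the positivity of $P$ and $q_n$ and the elementary propagation argument. The step I expect to be the main obstacle in a fully rigorous write-up is the endpoint analysis at $a_2$: justifying that boundedness forces $u_n$ to be the analytic Frobenius solution, hence $C^2$ up to $a_2$ with a finite second derivative and nonzero value, so that (\ref{Lsfn}) may legitimately be evaluated at the singular point $a_2$ to produce the sign of $u_n'(a_2)$. Once that boundary relation is secured, the monotonicity follows without any further asymptotic input.
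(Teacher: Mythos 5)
Your proof is correct and takes essentially the same route as the paper: you extract the sign of $u_n'(a_2)$ by evaluating (\ref{Lsfn}) at the degenerate endpoint $a_2$ (using $P(a_2)=0$, $P'(a_2)>0$, $u_n(a_2)\neq 0$, $u_n''(a_2)$ finite, and $\lambda_n > 2\max_{[a_2,a_3]}(x-\sigma)^2$), and then rule out a first interior critical point by the same sign contradiction in the ODE. The only difference is that the paper simply cites the boundedness, non-vanishing, and finiteness facts at $a_2$ from prior work, whereas you justify them with a Frobenius analysis at the regular singular point; this is a welcome elaboration, not a different argument.
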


\begin{proof}
The singular function $u_n(x)$ is a solution of (\ref{Lsfn}) for $x \in (a_2,a_4)\backslash \{a_3\}$, where $\lambda_n \to \infty$ as $n \to \infty$. By definition, $P(x) > 0$ for $x \in (a_2, a_3)$ and $P'(a_2) > 0$. Since $u_n(x)$ is bounded at $x=a_2$ and $u_n(a_2) \neq 0$ (see e.g. \cite{a-k}, Sec. 2), we may assume without loss of generality that $u_n(a_2) > 0$. Then if 
\begin{equation}
\lambda_n > \max_{a_2 \leq x \leq a_3} 2 (x-\sigma)^2
\label{minlambda}
\end{equation}
the RHS of (\ref{Lsfn}) is positive at $a_2$ and since $P(a_2)=0$, $u''_n(a_2)$ is finite and $P'(a_2) > 0$, one concludes that $u'_n(a_2) > 0$. We will now show that
$u'_n(x) > 0$ for $x \in (a_2, a_3)$.  

Let $\tilde x$ be the first point after $a_2$ for which $u_n'(\tilde x)=0$. Then, by construction we have $u_n(\tilde x)>0$ and $u_n''(\tilde x) \leq 0$. With this, evaluating (\ref{Lsfn}) at $x=\tilde x$ results in a contradiction since the LHS is non-positive whereas the RHS is positive. Hence, $u_n'(x)>0$ for all $x \in (a_2,a_3)$, i.e., $u_n$ is strictly monotonic.
\end{proof}

\begin{proof}[Proof of (\ref{asymptfn2}),(\ref{betamu})] 
Split the integral (\ref{xfnmu}) into
\begin{equation}
\|\X u_n\|_{L^2(\F)}^2= I_a+I_b=\int_{a_2+\rho}^{a_3-\mu} dx \, |u_n(x)|^2 + \int_{a_2}^{a_2+\rho} dx \, |u_n(x)|^2
\label{chinIaIb}
\end{equation}
with $\rho = O(\varepsilon^{1+2 \zeta})$ and $\mu>0$ such that the integration interval for $I_a$ is contained in the domain of validity of the WKB approximation for
$n > N_\mu$.  Using Lemma \ref{lemma1}, the first term is 
\begin{align}
I_a &= \int_{a_2+\rho}^{a_3-\mu} dx \, |u_n(x)|^2 = \frac{\varepsilon}{K_-}  (e^{-2 w_3(a_3-\mu)/\varepsilon} -e^{-2 w_3(a_2+ \rho)/\varepsilon}  ) (1+ O(\varepsilon^{1/2- \zeta}))
\nonumber \\
&= \frac{\varepsilon}{K_-}  e^{-2 w_3(a_3-\mu)/\varepsilon} (1-e^{2/\varepsilon[w_3(a_3-\mu)-w_3(a_2+\rho)]})  (1+ O(\varepsilon^{1/2- \zeta})) \nonumber \\
&= \frac{\varepsilon}{K_-}  e^{-2 w_3(a_3-\mu)/\varepsilon}  (1+ O(\varepsilon^{1/2- \zeta})), \nonumber
\end{align}
where in the second line the first factor in parantheses gets absorbed in the error term since $w_3(a_3-\mu) - w_3(a_2+ \rho)$ is negative and strictly bounded away from $0$.
Consider now the second term $I_b$. For sufficiently small $\varepsilon=1/\sqrt{\lambda_n}$, $u_n$ is monotonic on $(a_2,a_3)$ by Lemma \ref{lemma2}. Thus, 
\begin{equation}
I_b=\int_{a_2}^{a_2+\rho} dx \, |u_n(x)|^2 \leq \frac{\rho}{a_3-\mu-a_2-\rho} \, \int_{a_2+ \rho}^{a_3-\mu} dx \, |u_n(x)|^2 \leq O(\varepsilon^{1+2 \zeta}) \, I_a
\label{Ibbound}
\end{equation}
so that the contribution of $I_b$ can be absorbed in the error term of equation (\ref{I1}). We finally obtain
\begin{align}
\|\X u_n\|_{L^2(\F)}= (I_a+I_b)^{1/2} &= \left ( \frac{\varepsilon}{K_-}  e^{-2 w_3(a_3-\mu)/\varepsilon} (1+ O(\varepsilon^{1/2- \zeta})) \right )^{1/2} \nonumber \\
&= \frac{1}{\sqrt{n \pi}} e^{-\bb n} \, (1+ O(n^{-1/2+ \zeta}))
\label{chinall2}
\end{align}
\end{proof}

\end{document}